\documentclass[11pt,a4paper]{amsart}

\usepackage{amsmath}
\usepackage{amsfonts}
\usepackage{amssymb}
\usepackage{amsthm}
\usepackage{epsfig}
\usepackage{graphicx}
\usepackage{url}
\usepackage[usenames,dvipsnames]{xcolor}
\usepackage[colorlinks=true,linkcolor=Blue,citecolor=Green]{hyperref}
\usepackage{nicefrac}
\usepackage{aliascnt}		
\usepackage[alwaysadjust]{paralist}

\newtheorem{theorem}{Theorem}[section]
\newaliascnt{lemma}{theorem}
\newaliascnt{corollary}{theorem}
\newaliascnt{definition}{theorem}
\newaliascnt{remark}{theorem}
\newaliascnt{proposition}{theorem}
\newaliascnt{conjecture}{theorem}
\newaliascnt{example}{theorem}
\newaliascnt{problem}{theorem}

\aliascntresetthe{lemma}

\newtheorem*{lemma*}{Lemma}

\newtheorem{corollary}[corollary]{Corollary}
\aliascntresetthe{corollary}

\newtheorem*{corollary*}{Corollary}

\newtheorem{definition}[definition]{Definition}
\aliascntresetthe{definition}

\newtheorem*{definition*}{Definition}

\newtheorem{remark}[remark]{Remark}
\aliascntresetthe{remark}

\newtheorem*{remark*}{Remark}

\newtheorem{proposition}[proposition]{Proposition}
\aliascntresetthe{proposition}

\newtheorem*{proposition*}{Proposition}

\newtheorem{conjecture}[conjecture]{Conjecture}
\aliascntresetthe{conjecture}

\newtheorem*{conjecture*}{Conjecture}

\aliascntresetthe{example}

\newtheorem*{example*}{Example}

\newtheorem{problem}[problem]{Problem}
\aliascntresetthe{problem}

\newtheorem*{problem*}{Problem}

\DeclareMathOperator{\conv}{conv}

\def\P{\mathcal{P}}

\def\C{\mathcal{C}}
\def\R{\mathbb{R}}
\def\C{\mathbb{C}}
\def\Z{\mathbb{Z}}
\def\N{\mathbb{N}}
\def\i{\mathrm{i}}

\DeclareMathOperator{\vol}{vol}

\DeclareMathOperator{\aff}{aff}

\DeclareMathOperator{\LE}{G}
\DeclareMathOperator{\lE}{g}

\numberwithin{equation}{section}

\begin{document}

\title[On counterexamples to a conjecture of Wills]{On counterexamples to a conjecture of Wills and Ehrhart polynomials whose roots have equal real parts}
\author{Matthias Henze}
\address{Institut f\"ur Informatik, Freie Universit\"at Berlin, Takustra\ss e 9, 14195 Berlin, Germany}
\email{matthias.henze@fu-berlin.de}

\thanks{The work was partially supported by the Deutsche Forschungsgemeinschaft (DFG) within the project He 2272/4-1 and by
the ESF EUROCORES programme EuroGIGA-VORONOI, (DFG): Ro 2338/5-1.}


\keywords{Wills' conjecture, lattice polytope, $l$-reflexive polytope, Ehrhart polynomial}

\begin{abstract}
As a discrete analog to Minkowski's theorem on convex bodies, Wills conjectured that the Ehrhart coefficients of a centrally symmetric lattice polytope
with exactly one interior lattice point are maximized by those of the cube of side length two.
We discuss several counterexamples to this conjecture and, on the positive side, we identify a family of lattice polytopes that fulfill the claimed inequalities.
This family is related to the recently introduced class of $l$-reflexive polytopes.
\end{abstract}

\maketitle

\section{Introduction}

Let $\P^n$ denote the family of all \emph{lattice polytopes} in $\R^n$, that is, the convex hulls of finitely many points from the integer lattice $\Z^n$.
Such a polytope $P\in\P^n$ is called \emph{centrally symmetric} if $P=-P$.
Its \emph{volume} (Lebesgue measure) is denoted by $\vol(P)$ and its discrete volume, the \emph{lattice point enumerator}, by $\LE(P)=\#(P\cap\Z^n)$.

Minkowski~\cite{minkowski1896geometrie} proved that among the centrally symmetric compact convex sets that have only the origin as an interior lattice point,
the cube $C_n=[-1,1]^n$ has the biggest volume and contains the most lattice points.
More precisely, for every such set $K\subset\R^n$,
\begin{align}
\vol(K)\leq\vol(C_n)=2^n\qquad\textrm{and}\qquad\LE(K)\leq\LE(C_n)=3^n.\label{eqn_minkowski}
\end{align}

Wills~\cite{wills1981onan} proposed to further discretize these inequalities for the class of lattice polytopes.
A famous result of Ehrhart~\cite{ehrhart1968sur} shows that the counting function $\N\to\N,k\mapsto\LE(kP)$, is a polynomial in $k$
of degree~$n$, whenever $P\in\P^n$ is a lattice polytope.
This polynomial $\LE(kP)=\sum_{i=0}^n\lE_i(P)k^i$ is called the \emph{Ehrhart polynomial} of $P$ and the numbers $\lE_i(P)$ are the \emph{Ehrhart coefficients}.
It is not hard to see that $\lE_0(P)=1$ and $\lE_n(P)=\vol(P)$.
Moreover, also the second highest Ehrhart coefficient~$\lE_{n-1}$ has a nice geometric meaning which is given in detail in~\eqref{eqn_coeff_n-1}.
For details on Ehrhart theory including extensive references we refer the interested reader to~\cite{beckrobins2007computing}.
Now, using the description for $\lE_{n-1}(P)$, Wills proved that for every centrally symmetric $P\in\P^n$ that has only the origin as an interior lattice point
\[\lE_{n-1}(P)\leq\lE_{n-1}(C_n)=n2^{n-1}.\]
Furthermore, he wondered about a much stronger extremality property of the cube~$C_n$.

\begin{conjecture}[Wills' Conjecture~\cite{wills1981onan,gritzmannwills1993lattice}]\label{conj_ch4_wills_gi}
Let $P\in\P^n$ be a centrally symmetric lattice polytope with the origin being its only interior lattice point. Then
\[\lE_i(P)\leq\lE_i(C_n)=2^i\binom{n}{i}\quad\textrm{for all}\quad i=0,\ldots,n.\]
\end{conjecture}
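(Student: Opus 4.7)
The conjecture interpolates between the trivial equality $\lE_0(P)=1$, Wills' surface-area bound at $i=n-1$, and Minkowski's volume bound at $i=n$. My plan would be to handle the intermediate indices $1\le i\le n-2$ by an analogous face-based argument, comparing $\lE_i(P)$ with $\lE_i(C_n)$ term by term.

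The main step would use a face-sum representation of the Ehrhart coefficient, writing
\[
\lE_i(P) \;=\; \sum_{F} \alpha(F,P)\,\vol_i(F),
\]
where $F$ ranges over the $i$-faces of $P$ and the weight $\alpha(F,P)$ is determined by the local cone structure at $F$ (normalised by the appropriate sublattice determinant). This specialises to the weighted surface-area formula Wills used at $i=n-1$ and to $\vol(P)$ at $i=n$. Central symmetry pairs $F$ with $-F$, so one argues effectively on unordered face pairs, and one would try to bound each $\vol_i(F)$ by Minkowski applied inside the affine hull of $F\cup(-F)$, using the hypothesis that $0$ is the only interior lattice point of $P$ to force a Minkowski-type restriction on each central section. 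Summing these facewise bounds would then yield $2^i\binom{n}{i}$, matching the cube.

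The main obstacle, and the reason I would expect this strategy to break down for some intermediate $i$, is that the weights $\alpha(F,P)$ are not nonnegative in general, so a facewise domination of $\lE_i(P)$ by $\lE_i(C_n)$ is the wrong target: cancellations in the two sums need not align, and the value $2^i\binom{n}{i}$ encodes a very particular combinatorial balance between positive and negative local cone contributions that a generic centrally symmetric polytope is unlikely to mimic. Equivalently, the coefficients $\lE_i(P)$ can themselves be negative and admit no known monotonicity under inclusion or under deformations preserving the two hypotheses, so there is no obvious reduction to an extremal case. A back-up plan would be to pass to the $h^*$-vector, where Stanley nonnegativity provides a cleaner inequality framework, and then translate back; but the change of basis from $(h^*_i)$ to $(\lE_i)$ has signed Stirling entries and routinely reverses coordinatewise inequalities, so even a sharp $h^*$-bound need not yield the desired bound on $\lE_i$. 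This mismatch is the natural place to look for failures, which suggests searching for counterexamples among centrally symmetric lattice polytopes that are combinatorially far from $C_n$ in dimensions large enough for the unfavourable middle coefficients to dominate.
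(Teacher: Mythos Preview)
Your plan cannot succeed because the statement is a conjecture that the paper \emph{disproves}; there is no proof in the paper to compare against. Your own diagnosis in the second half is therefore the right one: the face-sum/$h^*$ approach breaks down exactly because the $\lE_i$ carry signs and admit no monotonicity, and the correct move is to look for counterexamples.

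Where your proposal stops at a heuristic (``search among polytopes combinatorially far from $C_n$ in high enough dimension''), the paper supplies explicit constructions. The first is $P_7=\conv\{C_6\times\{0\},\,C_6^\star\times\{-1,1\}\}$, for which a direct computation gives $\lE_1(P_7)=\tfrac{1534}{105}>14=\lE_1(C_7)$; taking products $P_7\times C_m$ and using $\lE_1(P\times Q)=\lE_1(P)+\lE_1(Q)$ propagates this to all $n\ge 7$. The second family is the bipyramids $Q_n=\conv\{C_{n-1}\times\{0\},\,\pm e_n\}$, whose Ehrhart polynomial is computed in closed form via Faulhaber's formula, yielding $\lE_1(Q_n)=2(n-1)+(4-2^n)B_{n-1}$. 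For odd $n$ this grows like $(n/\pi e)^n$ in absolute value, so the bound $\lE_1\le 2n$ fails badly along $n\equiv 1\pmod 4$; the same family also violates the conjecture at $i=3$ and $i=5$ in suitable dimensions. Thus the paper's ``proof'' is an explicit, computable refutation rather than an abstract argument, and your proposal would benefit from naming a candidate family and carrying out at least one such computation.
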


Minkowski's inequalities~\eqref{eqn_minkowski} nicely embed into these proposed relations, since $\vol(P)=\lE_n(P)$ and $\LE(P)=\sum_{i=0}^n\lE_i(P)$.
Wills proved his conjecture for $n=3$, and it gets further support by the fact that the Ehrhart polynomial of every $P\in\P^n$, that meets the conditions above,
is pointwise maximized by that one of the cube. That is, $\LE(kP)\leq\LE(kC_n)$, for $k\in\N$, which follows from a discrete version of Minkowski's $1$st Theorem due to
Betke, Henk \& Wills~\cite[Thm.~2.1]{betkehenkwills1993successive}.

Nevertheless, the objective of this paper is to exhibit counterexamples to Wills' Conjecture.
On the positive side, we identify a family of lattice polytopes that fulfill the claimed inequalities (\autoref{cor_gen_ineq}).
This family is related to the recently introduced $l$-reflexive polytopes which we also briefly discuss.

\section{Counterexamples to Wills' conjecture}

Let $C_n^\star=\conv\{\pm e_1,\ldots,\pm e_n\}$ be the standard crosspolytope in $\P^n$.
Consider the polytope $P_n=\conv\{C_{n-1}\times\{0\},C_{n-1}^\star\times\{-1,1\}\}$
that arises as the convex hull of an $(n-1)$-dimensional cube put at height~$0$ and two $(n-1)$-dimensional crosspolytopes put at height $-1$ and $1$, respectively.
Using \texttt{polymake}~\cite{gawrilowjoswig2000polymake} and \texttt{LattE}~\cite{latte2011ausers}, we see that $P_7$ is a counterexample to \autoref{conj_ch4_wills_gi}.
More precisely, its Ehrhart polynomial is given by
\[1+\frac{1534}{105}k+\frac{3188}{45}k^2+\frac{7112}{45}k^3+\frac{1756}{9}k^4+\frac{7004}{45}k^5+\frac{4952}{45}k^6+\frac{15656}{315}k^7,\]
and therefore $\lE_1(P_7)=\frac{1534}{105}>14=\lE_1(C_7)$.

The following relations on Ehrhart coefficients of products of lattice polytopes help us to construct counterexamples in every dimension $n\geq7$.
For the sake of completeness, we provide the short argument (see also~\cite[Exs.~2.4]{beckrobins2007computing}).

\begin{proposition}
Let $P\in\P^p$ and $Q\in\P^q$ be lattice polytopes. Then
\[\lE_j(P\times Q)=\sum_{i=0}^j\lE_i(P)\lE_{j-i}(Q)\quad\textrm{for all}\quad j=0,\ldots,p+q,\]
where $\lE_k(P)=\lE_l(Q)=0$, for all $k>p$ and $l>q$. 
\end{proposition}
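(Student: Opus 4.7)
The strategy is to observe that the lattice point enumerator is multiplicative under Cartesian products, then compare coefficients in the resulting polynomial identity. Concretely, for every positive integer $k$ we have
\[
k(P\times Q)=(kP)\times(kQ),
\]
and a point $(x,y)\in\R^{p+q}$ lies in $\Z^{p+q}$ if and only if $x\in\Z^p$ and $y\in\Z^q$. Hence
\[
\LE(k(P\times Q))=\#\bigl((kP\times kQ)\cap\Z^{p+q}\bigr)=\LE(kP)\cdot\LE(kQ).
\]

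Next I would substitute the Ehrhart expansions of $\LE(kP)$ and $\LE(kQ)$ on the right and of $\LE(k(P\times Q))$ on the left. This yields the polynomial identity
\[
\sum_{j=0}^{p+q}\lE_j(P\times Q)\,k^j=\Bigl(\sum_{i=0}^{p}\lE_i(P)\,k^i\Bigr)\Bigl(\sum_{l=0}^{q}\lE_l(Q)\,k^l\Bigr)=\sum_{j=0}^{p+q}\Bigl(\sum_{i=0}^{j}\lE_i(P)\lE_{j-i}(Q)\Bigr)k^j,
\]
using the convention $\lE_i(P)=0$ for $i>p$ and $\lE_l(Q)=0$ for $l>q$ to handle the convolution range cleanly. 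Since both sides agree for every $k\in\N$, comparing the coefficients of $k^j$ (or invoking the fact that a polynomial is determined by its values on $\N$) delivers the claimed formula.

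There is really no substantive obstacle here: the only thing to be careful about is justifying the polynomial identity from equality at integer points, which is immediate since two polynomials agreeing on infinitely many values are identical. Everything else is a direct computation.
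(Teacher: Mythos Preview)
Your proposal is correct and follows essentially the same argument as the paper: both use the multiplicativity $\LE(k(P\times Q))=\LE(kP)\LE(kQ)$, expand each side via the Ehrhart polynomials, and compare coefficients. Your write-up merely adds a line justifying why agreement on $\N$ forces equality of polynomials, which the paper leaves implicit.
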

\begin{proof}
For every $k\in\N$, we have
\begin{align*}
\LE(k(P\times Q))&=\LE(kP\times kQ)=\LE(kP)\LE(kQ)\\
&=\left(\sum_{i=0}^p\lE_i(P)k^i\right)\left(\sum_{j=0}^q\lE_j(Q)k^j\right)\\
&=\sum_{j=0}^{p+q}\left(\sum_{i=0}^j\lE_i(P)\lE_{j-i}(Q)\right)k^j.
\end{align*}
Comparing coefficients gives the claimed identities.
\end{proof}

The identity in the case $j=1$ implies
\[\lE_1(P_7 \times C_m)=\lE_1(P_7)+\lE_1(C_m)>\lE_1(C_7)+\lE_1(C_m)=\lE_1(C_{m+7}),\]
for all $m\in\N_0$.
Hence, Wills' Conjecture fails in every dimension $n\geq7$.

Another set of lattice polytopes that are of even simpler structure than the~$P_n$'s exhibit counterexamples also for higher Ehrhart coefficients.
Consider the polytope $Q_n=\conv\{C_{n-1}\times\{0\},\pm e_n\}$, i.e., a bipyramid over an $(n-1)$-dimensional cube (already appeared in~\cite[Prop.~1.1]{beyhenkhenzelinke2011notes}).
Invoking \texttt{polymake} and \texttt{LattE} again, we find
\begin{align*}
\lE_1(Q_9)=\frac{494}{15}&>18=\lE_1(C_9),\\
\lE_3(Q_{11})=1976&>1320=\lE_3(C_{11})\quad\textrm{and}\\
\lE_5(Q_{13})=\frac{260832}{5}&>41184=\lE_5(C_{13}).
\end{align*}

Moreover, the first Ehrhart coefficient of the bipyramid $Q_n$ behaves quite wildly and in dimensions $n=4k+1$ the proposed bound $\lE_1(P)\leq 2n$ in \autoref{conj_ch4_wills_gi} fails badly.
Recall that the Landau notation $g(n)\in\Theta(f(n))$ means that there are constants $c_1,c_2>0$ such that $c_1f(n)\leq g(n)\leq c_2f(n)$, for all large enough $n\in\N$.

\begin{proposition}\label{prop_ch4_counterexs_coeff}
For $n\in\N$, let $Q_n=\conv\left\{C_{n-1}\times\{0\},\pm e_n\right\}$. Then
\[\lE_1(Q_n)=2(n-1)+(4-2^n)B_{n-1},\]
where $B_j$ denotes the $j$th Bernoulli number.

In particular, we have
\[\lE_1(Q_n)=2(n-1)\quad\textrm{for even }n\in\N,\]
and
\[(-1)^{\frac{n-1}2}\lE_1(Q_n)\in\Theta\left(\left(\frac{n}{\pi e}\right)^n\right)\quad\textrm{for odd }n\in\N.\]
\end{proposition}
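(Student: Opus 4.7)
The plan is to obtain a closed form for $\LE(kQ_n)$ by horizontal slicing, and then extract the coefficient of $k$. The key geometric observation is that $Q_n$, being the bipyramid with base $C_{n-1}\times\{0\}$ and apexes $\pm e_n$, has horizontal slice at height $t\in[-1,1]$ equal to the scaled cube $(1-|t|)C_{n-1}$. Slicing $kQ_n$ at each integer height $s\in\{-k,\ldots,k\}$ therefore contributes $(2(k-|s|)+1)^{n-1}$ lattice points, yielding
\[\LE(kQ_n) = 2\sum_{j=0}^{k}(2j+1)^{n-1} - (2k+1)^{n-1}.\]

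Next I would split the sum over odd integers as
\[\sum_{j=0}^{k}(2j+1)^{n-1} = \sum_{m=1}^{2k+1} m^{n-1} - 2^{n-1}\sum_{m=1}^{k} m^{n-1},\]
and apply Faulhaber's formula $\sum_{m=1}^{N}m^{n-1} = \frac{1}{n}\sum_{j=0}^{n-1}\binom{n}{j}B_j^{+} N^{n-j}$ (with the convention $B_1^{+} = +1/2$) to each piece. Reading off the coefficient of $k$---the factor $N^{n-j}$ contributes $2(n-j)$ when $N=2k+1$ and contributes only for $j = n-1$ when $N=k$---and invoking the Bernoulli-polynomial identity $\sum_{j=0}^{n-1}\binom{n-1}{j}B_j = B_{n-1}(1) = B_{n-1}$ (valid for $n\geq 3$), the algebra should collapse to the claimed
\[\lE_1(Q_n) = 2(n-1) + (4-2^n)B_{n-1}.\]
I would verify the low-dimensional cases $n = 1, 2$ by direct computation; note that for $n=2$ the prefactor $4-2^n$ vanishes, so the convention choice for $B_1$ is immaterial.

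The two further assertions then follow quickly. If $n\geq 4$ is even, then $n-1$ is an odd integer $\geq 3$ and so $B_{n-1}=0$, reducing the formula to $2(n-1)$. For odd $n$, I write $n-1=2k$ and apply the classical asymptotic $B_{2k} = (-1)^{k+1}\frac{2(2k)!}{(2\pi)^{2k}}(1+o(1))$ together with Stirling's approximation: the factor $|4-2^n|\sim 2^n$ dominates the linear correction $2(n-1)$, the signs combine to $(-1)^{(n-1)/2}$, and the resulting magnitude $4(n-1)!/\pi^{n-1}$ grows on the order of $(n/(\pi e))^n$, giving the stated $\Theta$-estimate.

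The main obstacle will be the bookkeeping in the middle paragraph: the linear contributions of two Faulhaber sums (evaluated at $2k+1$ and at $k$) together with the correction $-(2k+1)^{n-1}$ must be combined into one clean expression, and the cancellation only goes through after recognizing the Bernoulli-polynomial identification $B_{n-1}(1)=B_{n-1}$. Everything else is essentially routine once the slicing description has been set up.
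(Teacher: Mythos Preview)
Your proposal is correct and follows essentially the same strategy as the paper: slice $kQ_n$ by horizontal hyperplanes, apply Faulhaber's formula, and close with a Bernoulli-polynomial identity, then derive the asymptotics from the standard estimate for $B_{2k}$ and Stirling. The only cosmetic difference is that the paper expands $(2j+1)^{n-1}$ binomially and arrives at $2^n B_{n-1}(\tfrac12)$, which it simplifies via $B_m(\tfrac12)=-(1-2^{1-m})B_m$, whereas your odd-equals-all-minus-even decomposition leads instead to the identity $B_{n-1}(1)=B_{n-1}$; both routes are standard and yield the same closed form.
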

\begin{proof}
Let $k\in\N$ and let $H_j=\{x\in\R^n:x_n=j\}$ be the orthogonal plane to $e_n$ of height $j$.
For $j=0,1,\ldots,k$, the plane $H_j$ intersects $kQ_n$ in a copy of $C_{n-1}$ scaled by $k-j$.
Counting the lattice points in $kQ_n$ with respect to these intersections, we find that the Ehrhart polynomial of $Q_n$ is given by
\begin{align*}
\LE(kQ_n)&=(2k+1)^{n-1}+2\sum_{j=0}^{k-1}(2j+1)^{n-1}\\
&=(2k+1)^{n-1}+2\sum_{j=0}^{k-1}\sum_{i=0}^{n-1}\binom{n-1}{i}(2j)^i\\
&=\sum_{i=0}^{n-1}\binom{n-1}{i}2^ik^i+2\sum_{i=0}^{n-1}\binom{n-1}{i}2^i\left(\sum_{j=0}^{k-1}j^i\right).
\end{align*}
Faulhaber's formula (see~\cite[\textsection23.1]{abramstegun1992handbook}) expresses the sum $\sum_{j=0}^{k-1}j^i$ as a polynomial in $k$, more precisely
\[\sum_{j=0}^{k-1}j^i=\frac1{i+1}\sum_{j=0}^i\binom{i+1}{j}B_jk^{i-j+1}=\frac1{i+1}\sum_{j=1}^{i+1}\binom{i+1}{j}B_{i-j+1}k^j,\]
where $B_j$ is the $j$th Bernoulli number\index{Bernoulli number}. Therefore, we continue as
\begin{align*}
\LE(kQ_n)&=\sum_{i=0}^{n-1}\binom{n-1}{i}2^ik^i+2\sum_{i=0}^{n-1}\binom{n-1}{i}\frac{2^i}{i+1}\sum_{j=1}^{i+1}\binom{i+1}{j}B_{i-j+1}k^j\\
&=\sum_{i=0}^{n-1}\binom{n-1}{i}2^ik^i+\frac2{n}\sum_{j=1}^n\sum_{i=j-1}^{n-1}\binom{n}{i+1}2^i\binom{i+1}{j}B_{i-j+1}k^j.
\end{align*}
Thus, the Ehrhart coefficients of $Q_n$ are given by
\begin{align*}
\lE_i(Q_n)=\binom{n-1}{i}2^i+\frac2{n}\sum_{j=i-1}^{n-1}\binom{n}{j+1}2^j\binom{j+1}{i}B_{j-i+1},
\end{align*}
for all $i=1,\ldots,n$. In the case $i=1$, this gives us
\begin{align*}
\lE_1(Q_n)&=2(n-1)+2\sum_{j=0}^{n-1}\binom{n-1}{j}2^jB_j=2(n-1)+2^nB_{n-1}\left(\frac12\right),
\end{align*}
where $B_n(x)=\sum_{j=0}^n\binom{n}{j}x^{n-j}B_j$ denotes the $n$th Bernoulli polynomial.
In~\cite[\textsection23.1]{abramstegun1992handbook}, we find the identity $B_n\left(\frac12\right)=-(1-2^{1-n})B_n$, which leads to our desired expression $\lE_1(Q_n)=2(n-1)+(4-2^n)B_{n-1}$.

Since $B_j=0$ for all odd indices $j\geq3$, we have $\lE_1(Q_n)=2(n-1)$ for all even $n\in\N$.
The Bernoulli numbers with even indices satisfy (see~\cite[\textsection23.1]{abramstegun1992handbook})
\[\frac{2(2j)!}{(2\pi)^{2j}}<(-1)^{j+1}B_{2j}<\frac{2(2j)!}{(2\pi)^{2j}}\left(\frac1{1-2^{1-2j}}\right).\]
Assuming that $n=2k+1$ for some $k\in\N$, we obtain
\begin{align*}
(-1)^k\lE_1(Q_{2k+1})&=(-1)^k4k+(-1)^k(4-2^{2k+1})B_{2k}\\
&\in\Theta\left(\frac{2^{2k+2}(2k)!}{(2\pi)^{2k}}\right)=\Theta\left(\left(\frac{n}{\pi e}\right)^n\right).
\end{align*}
The last equality comes from Stirling's approximation of the factorial.
\end{proof}

\section{Lattice polytopes with Ehrhart polynomials with roots of equal real part}\label{sect_roots}

This section deals with linear inequalities in the spirit of Wills' Conjecture for a special class of lattice polytopes.
The Ehrhart polynomial of a lattice polytope $P\in\P^n$ may be understood as a polynomial in a complex variable and thus it makes sense to speak about its roots.
Following \cite{beyhenkwills2007notes}, we denote these roots by $-\gamma_1(P),\ldots,-\gamma_n(P)$.
Using $\lE_n(P)=\vol(P)$, we get
\[\LE(sP)=\sum_{i=0}^n\lE_i(P)s^i=\vol(P)\prod_{i=1}^n(s+\gamma_i(P))\quad\textrm{for}\quad s\in\C.\]

Braun~\cite{braun2008norm} proved that the roots of an Ehrhart polynomial lie in the disc with center $-\frac12$ and radius $n(n-\frac12)$.
This is only one reason to study the situation in which the real part of all roots equals $-\frac12$ as it was done for example in~\cite{beckrobins2007computing,beyhenkwills2007notes}.
The cube $C_n$ and the crosspolytope $C_n^\star$ are standard examples of lattice polytopes that belong to this class (see the references above).
Our main result here is concerned with a broader class of lattice polytopes.

\begin{theorem}\label{thm_ch4_wills_conj_-12}
Let $P\in\P^n$ be a lattice polytope with the property that all the roots of its Ehrhart polynomial have real part $-\frac1{a}$, for some $a>0$.
\begin{enumerate}[i)]
 \item For all $0\leq s< t\leq n$, we have
  \[\frac{\lE_t(P)}{\lE_s(P)}\leq a^{t-s}\frac{\binom{n}{t}}{\binom{n}{s}}.\]
  For $(s,t)=(n-1,n)$ we have equality. For every $0\leq s<t\leq n$ with $(s,t)\neq(n-1,n)$ equality holds if and only if $\LE(kP)=(ak+1)^n$, for all $k\in\N$.
 \item We have \[\vol(P)\leq\left(\frac{a}{a+1}\right)^n\LE(P),\]
  and equality holds if and only if $\LE(kP)=(ak+1)^n$, for all $k\in\N$.
 \item We have \[\LE(P)\leq\frac{(a+1)^{n-2}(a+2)}{a^{n-1}}\vol(P)+(a+1)^{n-2}.\]
  Equality holds if and only if there is at most one pair of complex conjugate roots with nonzero imaginary part. In particular, equality holds for $n\in\{2,3\}$.
\end{enumerate}
\end{theorem}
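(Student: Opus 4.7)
The plan is to use the change of variable $u = as+1$, which sends each root of the Ehrhart polynomial to the imaginary axis. Writing $\gamma_j(P)=1/a + i\beta_j$, the multiset $\{\beta_j\}$ is closed under negation (since $\LE$ has real coefficients), so
\[
R(u) := \prod_{j=1}^n (u+ia\beta_j) = u^d\prod_{\beta_j>0}(u^2 + a^2\beta_j^2),
\]
where $d\equiv n\pmod 2$ counts the zero $\beta_j$'s. Thus $R(u)=\sum_{k=0}^n r_k u^k$ has nonnegative real coefficients with $r_n=1$; since only powers $u^k$ with $k\equiv d\pmod 2$ appear, we get $r_{n-1}=0$ automatically, while $r_{n-2}$ is a positive multiple of $\sum_j\beta_j^2$. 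Direct substitution yields $\LE(sP)=a^{-n}\vol(P)\,R(as+1)$; expanding in $s$ gives the working formula $\lE_m(P)=a^{m-n}\vol(P)\sum_{k=m}^n r_k\binom{k}{m}$, together with the two useful specializations $\LE(P)=a^{-n}\vol(P)R(a+1)$ and $\vol(P)=a^n/R(1)$.

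For (i) the claimed inequality reduces, after cancelling $a^{t-s}\vol(P)$, to
\[
\binom{n}{s}\sum_k r_k\binom{k}{t}\ \le\ \binom{n}{t}\sum_k r_k\binom{k}{s},
\]
which I verify term by term: the quotient $\binom{k}{t}/\binom{k}{s}$ is a product of rising factors of $k$, hence monotone in $k$ and maximized at $k=n$, so $\binom{n}{s}\binom{k}{t}\le\binom{n}{t}\binom{k}{s}$ for every $k\le n$. Summation weighted by $r_k\ge 0$ closes the bound. For equality, each $k\in[s,n-1]$ at which the per-term inequality is strict must have $r_k=0$; for $(s,t)=(n-1,n)$ this is the automatic $r_{n-1}=0$, which explains the universal equality, while in every other admissible case $s\le n-2$, so $r_{n-2}=0$, forcing all $\beta_j=0$ and $R(u)=u^n$, i.e.\ $\LE(kP)=(ak+1)^n$. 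Part (ii) is now immediate: $R(a+1)=\sum_k r_k(a+1)^k\ge r_n(a+1)^n=(a+1)^n$, with equality iff only $r_n$ is nonzero.

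For (iii), using $a(a+2)=(a+1)^2-1$ together with $R(a+1)-(a+1)^n=\sum_{k=0}^{n-1} r_k(a+1)^k$ and $R(1)-1=\sum_{k=0}^{n-1} r_k$, plugging $\LE(P)=a^{-n}\vol(P)R(a+1)$ and $\vol(P)=a^n/R(1)$ into the target inequality and clearing denominators rearranges it to
\[
\sum_{k=0}^{n-1} r_k\bigl[(a+1)^{n-2}-(a+1)^k\bigr]\ge 0.
\]
The terms $k=n-1$ (since $r_{n-1}=0$) and $k=n-2$ (trivially) vanish, and every $k\le n-3$ contributes a nonnegative summand; equality forces $r_k=0$ for all $k\le n-3$, leaving at most one factor $u^2+a^2\beta_j^2$ (at most one pair of nonreal conjugate roots), and the sum is empty when $n\in\{2,3\}$. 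The main technical step throughout is spotting the substitution $u=as+1$ and tracking the sign structure of $R(u)$---once the nonnegativity of the $r_k$'s and the vanishing $r_{n-1}=0$ are in place, each of (i)--(iii) falls to routine binomial manipulations.
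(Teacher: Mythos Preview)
Your proof is correct and follows essentially the same strategy as the paper: both exploit that the factorization over conjugate root pairs produces a polynomial in $(k+\tfrac1a)$ (equivalently, in your $u=as+1$) with nonnegative coefficients, and both reduce part~(i) to the elementary binomial comparison $\binom{n}{s}\binom{k}{t}\le\binom{n}{t}\binom{k}{s}$ for $k\le n$. The one noteworthy difference is packaging: the paper expands explicitly in elementary symmetric polynomials $\sigma_{l-j}(b_1^2,\dots,b_l^2)$ and treats the even and odd dimensions separately, whereas your introduction of the single auxiliary polynomial $R(u)=\sum_k r_k u^k$ with $r_k\ge 0$ and $r_{n-1}=0$ handles both parities at once and makes the equality analysis (especially the role of $r_{n-1}=0$ for $(s,t)=(n-1,n)$ and of $r_{n-2}=0$ otherwise) more transparent. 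The arguments for parts~(ii) and~(iii) are likewise the same in substance, with your version again slightly more compact.
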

\begin{proof}
\romannumeral1): As above, we write
\[\LE(kP)=\vol(P)\prod_{i=1}^n\left(k+\gamma_i(P)\right).\]
By assumption, the real part of $-\gamma_i(P)$ equals $-\frac1{a}$, for all $i=1,\ldots,n$.

We consider the case $n=2l$ first. There are $b_1,\ldots,b_l\in\R$ such that
\begin{align}
\frac{\LE(kP)}{\vol(P)}&=\prod_{j=1}^l\left(k+\frac1{a}\pm b_j\i\right)=\prod_{j=1}^l\left(\left(k+\frac1{a}\right)^2+b_j^2\right)\label{eqn_main_thm_1}\\
&=\sum_{j=0}^l\sigma_{l-j}\left(b_1^2,\ldots,b_l^2\right)\left(k+\frac1{a}\right)^{2j}\nonumber\\
&=\sum_{j=0}^l\sigma_{l-j}\left(b_1^2,\ldots,b_l^2\right)\sum_{t=0}^{2j}\binom{2j}{t}\left(\frac1{a}\right)^{2j-t}k^t\nonumber\\
&=\sum_{t=0}^{2l}a^t\left(\sum_{j=\lceil\frac{t}2\rceil}^la^{-2j}\sigma_{l-j}\left(b_1^2,\ldots,b_l^2\right)\binom{2j}{t}\right)k^t.\nonumber
\end{align}
As usual, $\sigma_j(x_1,\ldots,x_l)=\sum_{1\leq i_1<\ldots<i_j\leq l}\prod_{t=1}^j x_{i_t}$ denotes the $j$th elementary symmetric polynomial.
From the above identity we can read off a formula for the Ehrhart coefficients $\lE_t(P)$ and
it follows that, for $0\leq s\leq t\leq n$, the inequality
\[\frac{\lE_t(P)}{\lE_s(P)}=a^{t-s}\frac{\sum_{j=\lceil\frac{t}2\rceil}^la^{-2j}\sigma_{l-j}\left(b_1^2,\ldots,b_l^2\right)\binom{2j}{t}}
{\sum_{j=\lceil\frac{s}2\rceil}^la^{-2j}\sigma_{l-j}\left(b_1^2,\ldots,b_l^2\right)\binom{2j}{s}}\leq a^{t-s}\frac{\binom{n}{t}}{\binom{n}{s}}\]
is equivalent to
\begin{align}
&\sum_{j=\lceil\frac{t}2\rceil}^la^{-2j}\sigma_{l-j}\left(b_1^2,\ldots,b_l^2\right)\binom{2j}{t}\binom{2l}{s}\nonumber\\
&\qquad\qquad\qquad\leq\sum_{j=\lceil\frac{s}2\rceil}^la^{-2j}\sigma_{l-j}\left(b_1^2,\ldots,b_l^2\right)\binom{2j}{s}\binom{2l}{t}.\label{eqn_main_thm_2}
\end{align}
Since all summands are nonnegative and $t\geq s$, it suffices to show that $\binom{2j}{t}\binom{2l}{s}\leq\binom{2j}{s}\binom{2l}{t}$, for all $j=\lceil\frac{t}2\rceil,\ldots,l$.
As this is equivalent to $(2j-t+1)\cdot\ldots\cdot(2j-s)\leq(2l-t+1)\cdot\ldots\cdot(2l-s)$, we are done.

Since $t\geq s$, we have the same number of summands on either side of \eqref{eqn_main_thm_2} if and only if $t$ is even and $s=t-1$.
Moreover, since $s\neq t$, we have $\binom{2j}{t}\binom{2l}{s}\leq\binom{2j}{s}\binom{2l}{t}$ if and only if $j=l$.
These two observations imply that we have equality in~\eqref{eqn_main_thm_2} for $(s,t)=(2l-1,2l)=(n-1,n)$, and for every other pair $(s,t)$ with $0\leq s<t\leq n$
if and only if $b_1=\ldots=b_l=0$, which is equivalent to $\LE(kP)=(ak+1)^n$, for all $k\in\N$.
Note, that it is not clear that this means that $P$ is unimodularly equivalent to $\frac{a}2C_n$ (for related work see~\cite{haasemcallister2008quasi}).

The case of odd dimensions $n=2l+1$ is similar. In comparison to the even-dimensional case, there is an additional real zero $-\frac1{a}$ and we get
\begin{align*}
\frac{\LE(kP)}{\vol(P)}&=\left(k+\frac1{a}\right)\prod_{j=1}^l\left(k+\frac1{a}\pm b_j\i\right)\\
&=\left(k+\frac1{a}\right)\sum_{t=0}^{2l}a^t\left(\sum_{j=\lceil\frac{t}2\rceil}^la^{-2j}\sigma_{l-j}\left(b_1^2,\ldots,b_l^2\right)\binom{2j}{t}\right)k^t\\
&=\sum_{t=1}^{2l+1}a^{t-1}\left(\sum_{j=\lceil\frac{t-1}2\rceil}^la^{-2j}\sigma_{l-j}\left(b_1^2,\ldots,b_l^2\right)\binom{2j}{t-1}\right)k^t\\
&\phantom{=}\ +\sum_{t=0}^{2l}a^{t-1}\left(\sum_{j=\lceil\frac{t}2\rceil}^la^{-2j}\sigma_{l-j}\left(b_1^2,\ldots,b_l^2\right)\binom{2j}{t}\right)k^t\\
&=\sum_{t=0}^{2l+1}a^{t-1}\left(\sum_{j=\lceil\frac{t-1}2\rceil}^la^{-2j}\sigma_{l-j}\left(b_1^2,\ldots,b_l^2\right)\binom{2j+1}{t}\right)k^t.
\end{align*}
Therefore, the desired inequality $\frac{\lE_t(P)}{\lE_s(P)}\leq a^{t-s}\frac{\binom{n}{t}}{\binom{n}{s}}$ is equivalent to
\begin{align*}
&\sum_{j=\lceil\frac{t-1}2\rceil}^la^{-2j}\sigma_{l-j}\left(b_1^2,\ldots,b_l^2\right)\binom{2j+1}{t}\binom{2l+1}{s}\\
&\qquad\qquad\qquad\leq\sum_{j=\lceil\frac{s-1}2\rceil}^la^{-2j}\sigma_{l-j}\left(b_1^2,\ldots,b_l^2\right)\binom{2j+1}{s}\binom{2l+1}{t},
\end{align*}
and the further analysis is analogous to the even-dimensional case.

\romannumeral2): This follows directly from part \romannumeral1), since
\[a^n\LE(P)=\sum_{s=0}^na^n\lE_s(P)\geq\sum_{s=0}^n\binom{n}{s}a^s\lE_n(P)=(a+1)^n\vol(P).\]
The characterization of equality is inherited from part \romannumeral1)\,as well.

\romannumeral3): Multiplying either side of the claimed inequality by $\frac{a^n}{\vol(P)}$, shows that we need to prove
\[\frac{a^n\LE(P)}{\vol(P)}\leq(a+1)^n-(a+1)^{n-2}+(a+1)^{n-2}\frac{a^n}{\vol(P)}.\]
Again, we consider the case $n=2l$ first. Using Equation~\eqref{eqn_main_thm_1} for $k=0$ and $k=1$ gives the equivalent inequality
\[\prod_{j=1}^l\left((a+1)^2+(ab_j)^2\right)\leq(a+1)^{2l}-(a+1)^{2l-2}+(a+1)^{2l-2}\prod_{j=1}^l\left(1+(ab_j)^2\right).\]
Expanding the products yields that this is equivalent to
\[\sum_{j=0}^{l-1}(a+1)^{2j}\sigma_{l-j}\left((ab_1)^2,\ldots,(ab_l)^2\right)\leq\sum_{j=0}^{l-1}(a+1)^{2l-2}\sigma_{l-j}\left((ab_1)^2,\ldots,(ab_l)^2\right),\]
which even holds summand-wise.

Equality holds if and only if $\sigma_{l-j}\left((ab_1)^2,\ldots,(ab_l)^2\right)=0$, for all $j=0,\ldots,l-2$.
Since $a>0$, this holds if and only if at most one of $b_1,\ldots,b_l$ is nonzero.
This means, that there is at most one conjugate pair of roots of $\LE(kP)$ having nonzero imaginary part.

The case $n=2l+1$ is completely analogous.
\end{proof}

\begin{remark}\label{rem_gen_ineq}
The inequality in \autoref{thm_ch4_wills_conj_-12}~\romannumeral3) generalizes the linear inequality that is part of the characterization of $4$-dimensional lattice polytopes
having only roots with real part equal to $-\frac12$ (see~\cite[Prop.~1.9]{beyhenkwills2007notes}).
\end{remark}

Applying \autoref{thm_ch4_wills_conj_-12}~\romannumeral1) with $a=2$ and $s=0$ gives the following positive result concerning Wills' Conjecture.

\begin{corollary}\label{cor_gen_ineq}
\autoref{conj_ch4_wills_gi} holds for every lattice polytope with the property that all the roots of its Ehrhart polynomial have real part equal to $-\frac12$.
\end{corollary}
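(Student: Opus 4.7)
The plan is to deduce the corollary as an immediate specialization of \autoref{thm_ch4_wills_conj_-12}(i). Wills' Conjecture asserts the bound $\lE_i(P)\leq 2^i\binom{n}{i}$ for $i=0,\ldots,n$, and the hypothesis of the corollary translates directly into the hypothesis of the theorem with the parameter choice $a=2$, since the assumed real part $-\tfrac12$ of every Ehrhart root equals $-\tfrac1a$ precisely when $a=2$.

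The main step is then to feed this into \autoref{thm_ch4_wills_conj_-12}(i) with $s=0$ and $t=i$. Because $\lE_0(P)=1$ and $\binom{n}{0}=1$, the inequality
\[
\frac{\lE_i(P)}{\lE_0(P)}\leq a^{i-0}\frac{\binom{n}{i}}{\binom{n}{0}}
\]
collapses to exactly $\lE_i(P)\leq 2^i\binom{n}{i}=\lE_i(C_n)$, which is the inequality claimed by \autoref{conj_ch4_wills_gi}. Iterating over $i=0,\ldots,n$ gives the full conclusion.

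There is essentially no further obstacle; the only subtlety worth flagging is that the corollary's hypothesis does not presuppose central symmetry or the one-interior-lattice-point condition that appears in Wills' Conjecture. The theorem's inequality holds purely from the root-location assumption, so the conclusion of Wills' Conjecture is valid for this class regardless of whether those additional geometric hypotheses are in force. In particular, no adaptation of the argument of \autoref{thm_ch4_wills_conj_-12}(i) is needed; the corollary is a clean specialization.
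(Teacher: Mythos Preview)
Your proposal is correct and follows precisely the paper's own approach: the paper derives the corollary by applying \autoref{thm_ch4_wills_conj_-12}(i) with $a=2$ and $s=0$, which is exactly what you do. Your additional remark about the geometric hypotheses of Wills' Conjecture being unnecessary here is accurate but not part of the paper's (one-line) argument.
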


One may wonder whether lattice polytopes with an Ehrhart polynomial all of whose roots have real part equal to $-\frac1{a}$ actually exist.
Let $P$ be a lattice polytope such that all roots of its Ehrhart polynomial have real part~$-\frac12$.
Such polytopes are quite abundant, see~\cite{beyhenkwills2007notes}.
Now, for every $l\in\N$, the roots of the Ehrhart polynomial of $lP$ have real part~$-\frac1{2l}$.
This construction is not very satisfactory though.

Kasprzyk \& Nill~\cite{kasprzyknill2012reflexive} introduced the class of \emph{$l$-reflexive polytopes} which contains more interesting examples with the desired property.
Before we can give their definition, we need to fix some notation.
A lattice point $u\in\Z^n\setminus\{0\}$ is \emph{primitive} if the only lattice points contained in the line segment from $0$ to $u$ are its endpoints.
Every lattice polytope $P\in\P^n$ has a unique irredundant representation $P=\{x\in\R^n:u_i^\intercal x\leq l_i, i=1,\ldots,m\}$, where $u_i\in\Z^n$ are nonzero primitive lattice vectors
and the $l_i$ are natural numbers.
The \emph{index} of $P$ is defined as the least common multiple of $l_1,\ldots,l_m$.

\begin{definition}[$l$-reflexive polytope]\label{def_l-reflexive}
A lattice polytope $P\in\P^n$ is called \emph{$l$-reflexive}, for some $l\in\N$, if
\begin{enumerate}[i)]
 \item the origin is contained in the interior of $P$,
 \item the vertices of $P$ are primitive,
 \item $l_i=l$, for all $i=1,\ldots,m$.
\end{enumerate}
\end{definition}

Note, that $1$-reflexive polytopes are the much studied \emph{reflexive polytopes} which have a close connection to algebraic geometry
(see~\cite{nill2005diss} and the references therein).

Now, Kasprzyk \& Nill study properties of $l$-reflexive polytopes and give a classification algorithm for the planar case.
For example, they find~$3605$ $l$-reflexive polygons with an index at most $60$.
The relation to our question is~\cite[Prop.~17]{kasprzyknill2012reflexive} which says that every $l$-reflexive polygon, not unimodularly equivalent to the triangle
$\conv\{(-1,-1),(-1,2),(2,-1)\}$, has an Ehrhart polynomial all of whose roots have real part equal to $-\frac1{2l}$.

We conclude by extending characterizations given in~\cite{kasprzyknill2012reflexive,beyhenkwills2007notes}.
Recall that the \emph{polar} of a polytope $P\in\P^n$ is defined as $P^\star=\{x\in\R^n:x^\intercal y\leq 1,\textrm{ for all }y\in P\}$.

\begin{proposition}\label{prop_charact_l-refl}
Let $P\in\P^n$ be a lattice polytope with index $l$ that contains the origin in its interior.
Then, the following are equivalent:
\begin{enumerate}[i)]
 \item $P$ is an $l$-reflexive polytope,
 \item $lP^\star$ is a lattice polytope,
 \item $\lE_{n-1}(P)=\frac{n}{2l}\vol(P)$.
\end{enumerate}
\end{proposition}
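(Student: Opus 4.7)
My plan is to unify all three conditions by reducing each to the single algebraic statement that, in the irredundant hyperplane description of $P$, every $l_i$ equals $l$. I would begin by writing $P=\{x\in\R^n:u_i^\trans x\leq l_i,\,i=1,\ldots,m\}$ irredundantly with primitive $u_i\in\Z^n$ and positive integers $l_i$ satisfying $\mathrm{lcm}(l_1,\ldots,l_m)=l$; in particular $l_i\mid l$ for each $i$. Since $0\in\inter P$, each $u_i/l_i$ is a vertex of $P^\star$, so the vertices of $lP^\star$ are the lattice points $(l/l_i)u_i$.

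The main inputs are the two classical lattice-geometric identities
\begin{align*}
\vol(P) &= \tfrac{1}{n}\sum_{i=1}^m l_i\,\vol_{n-1}^{\Lambda_i}(F_i),\\
\lE_{n-1}(P) &= \tfrac{1}{2}\sum_{i=1}^m\vol_{n-1}^{\Lambda_i}(F_i),
\end{align*}
where $F_i$ is the facet of $P$ with outer normal $u_i$ and $\vol_{n-1}^{\Lambda_i}(F_i)$ is its normalized lattice volume in $\aff F_i$. The first identity follows from the pyramid decomposition of $P$ with apex $0$, combined with $\dist(0,\aff F_i)=l_i/\|u_i\|$ and the fact that primitivity of $u_i$ forces the induced sublattice on $\aff F_i$ to have covolume $\|u_i\|$; the second is the standard formula for the second-highest Ehrhart coefficient. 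Subtracting them in the appropriate proportion yields
\[
2l\cdot\lE_{n-1}(P)-n\vol(P)=\sum_{i=1}^m(l-l_i)\,\vol_{n-1}^{\Lambda_i}(F_i),
\]
a sum of nonnegative terms, so iii) is equivalent to $l_i=l$ for every $i$.

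For ii), the vertex description above shows that $(l/l_i)u_i$ is a primitive lattice vector if and only if $l/l_i=1$, i.e., $l_i=l$, under the convention---implicit in the duality $P\leftrightarrow lP^\star$ developed in~\cite{kasprzyknill2012reflexive}---that ``$lP^\star$ is a lattice polytope'' demands primitive vertices. Together with the remaining clauses of~\autoref{def_l-reflexive} (origin interior to $P$, and primitivity of the vertices of $P$, inherited via polar duality), the three conditions then coincide. I expect the main obstacle to be the careful bookkeeping of primitivity conventions across the polar duality: specifically, transferring vertex primitivity between $P$ and $lP^\star$, without which the implications ii)$\Rightarrow$i) and iii)$\Rightarrow$i) would recover only the numerical condition $l_i=l$. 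This step is settled by invoking the symmetry established in~\cite{kasprzyknill2012reflexive} that $l$-reflexivity is preserved under $P\mapsto lP^\star$.
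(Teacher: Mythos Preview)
Your derivation of
\[
2l\,\lE_{n-1}(P)-n\vol(P)=\sum_{i=1}^m(l-l_i)\,\vol_{n-1}^{\Lambda_i}(F_i)
\]
from the pyramid decomposition of $\vol(P)$ and Ehrhart's formula for $\lE_{n-1}(P)$, together with the conclusion that \textup{iii)} is equivalent to ``$l_i=l$ for all $i$'', is precisely the argument the paper gives for \textup{i)}\,$\Leftrightarrow$\,\textup{iii)}. For \textup{i)}\,$\Leftrightarrow$\,\textup{ii)} the paper does not argue directly at all; it simply quotes \cite[Prop.~2]{kasprzyknill2012reflexive}.

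Your direct attack on \textup{ii)}, however, does not go through. You yourself note that the vertices of $lP^\star$ are the points $(l/l_i)u_i$; since $l=\mathrm{lcm}(l_1,\ldots,l_m)$ divides each $l$ by $l_i$, these are \emph{always} lattice points, so under the ordinary meaning of ``lattice polytope'' condition \textup{ii)} holds automatically and cannot force $l_i=l$. Rescuing this by stipulating that ``lattice polytope'' should mean ``lattice polytope with primitive vertices'' is not a convention the paper or standard usage adopts, and even under that reading you only recover $l_i=l$; the primitivity of the vertices of $P$ demanded by \autoref{def_l-reflexive} is still unaccounted for. Your final appeal to the duality $P\mapsto lP^\star$ from \cite{kasprzyknill2012reflexive} is circular here, since that symmetry statement already presupposes $l$-reflexivity on one side. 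You are right that this bookkeeping is the crux --- indeed the paper's own proof of \textup{iii)}\,$\Rightarrow$\,\textup{i)} also stops at $l_i=l$ without separately checking vertex primitivity --- but it is not resolved by the sketch you give, and the clean route is to do as the paper does and defer \textup{i)}\,$\Leftrightarrow$\,\textup{ii)} to the reference.
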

\begin{proof}
The equivalence of \romannumeral1) and \romannumeral2) is shown in \cite[Prop.~2]{kasprzyknill2012reflexive}.

\romannumeral1) $\Longleftrightarrow$ \romannumeral3): Let $F_i=\{x\in\R^n:u_i^\intercal x=l_i\}\cap P$, for $i=1,\ldots,m$, be the facets of $P$, where as before the $u_i$ are primitive normal vectors.
As Ehrhart~\cite{ehrhart1967sur} already showed (cf.~\cite[Thm.~5.6]{beckrobins2007computing})
\begin{align}
\lE_{n-1}(P)&=\frac12\sum_{i=1}^m\frac{\vol_{n-1}(F_i)}{\det(\aff F_i\cap\Z^n)}.\label{eqn_coeff_n-1}
\end{align}
Therein, $\vol_{n-1}(F_i)$ denotes the $(n-1)$-dimensional volume of the facet $F_i$, $\aff F_i$ its affine hull, and $\det(\aff F_i\cap\Z^n)=\|u_i\|$
(see~\cite[Prop.~1.2.9]{martinet2003perfect}) the determinant of the sublattice of $\Z^n$ contained in $\aff F_i$.

Now, if $P$ is $l$-reflexive, then $l_i=l$ for all $i=1,\ldots,m$, and writing $F_i^o=\conv\{0,F_i\}$, we have
\begin{align*}
\lE_{n-1}(P)&=\frac12\sum_{i=1}^m\frac{\vol_{n-1}(F_i)}{\det(\aff F\cap\Z^n)}=\frac{n}{2l}\sum_{i=1}^m\frac{l\cdot\vol_{n-1}(F_i)}{n\cdot||u_i||}\\
&=\frac{n}{2l}\sum_{i=1}^m\vol(F_i^o)=\frac{n}{2l}\vol(P).
\end{align*}
Conversely, let $\lE_{n-1}(P)=\frac{n}{2l}\vol(P)$. Similarly as above, we get
\[\sum_{i=1}^m\frac{l_i\cdot\vol_{n-1}(F_i)}{n\cdot||u_i||}=\vol(P)=\frac{2l}{n}\lE_{n-1}(P)=\sum_{i=1}^m\frac{l\cdot\vol_{n-1}(F_i)}{n\cdot||u_i||}.\]
Therefore, $\sum_{i=1}^m(l-l_i)\frac{\vol_{n-1}(F_i)}{||u_i||}=0$.
Since $l\geq l_i$, for all $i=1,\ldots,m$, we get $l=l_i$, for all $i=1,\ldots,m$, whence $P$ is $l$-reflexive.
\end{proof}

Bey, Henk \& Wills~\cite[Prop.~1.8]{beyhenkwills2007notes} showed that every lattice polytope $P\in\P^n$ whose Ehrhart polynomial has only roots with real part
equal to $-\frac12$ is unimodularly equivalent to a reflexive polytope.
Their arguments apply in order to extend~\cite[Prop.~16]{kasprzyknill2012reflexive} as follows:

\begin{corollary}
Let $P\in\P^n$ be a lattice polytope with index $l$ that contains the origin in its interior.
If all roots of the Ehrhart polynomial of $P$ have real part equal to $-\frac1{2l}$, then, up to a unimodular transformation, $P$ is an $l$-reflexive polytope.
\end{corollary}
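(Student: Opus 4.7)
The plan is to reduce the statement to \autoref{prop_charact_l-refl}. Concretely, I would establish the identity $\lE_{n-1}(P)=\frac{n}{2l}\vol(P)$ via Vieta's formulas, and then invoke the implication iii)~$\Rightarrow$~i) of that proposition.

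First I would observe that since $\LE(kP)$ has rational (in particular real) coefficients, its non-real roots $-\gamma_i(P)$ occur in complex conjugate pairs. Hence, when the $\gamma_i(P)$ are summed, the imaginary contributions cancel pairwise. Combined with the hypothesis that the real part of each $-\gamma_i(P)$ equals $-\frac{1}{2l}$, this yields
\[\sum_{i=1}^n\gamma_i(P)=\frac{n}{2l}.\]

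Next, from the factorization $\LE(kP)=\vol(P)\prod_{i=1}^n(k+\gamma_i(P))$, Vieta's formulas identify the coefficient of $k^{n-1}$ as $\vol(P)\sum_i\gamma_i(P)$, so $\lE_{n-1}(P)=\tfrac{n}{2l}\vol(P)$. Since by assumption $P$ has index $l$ and contains the origin in its interior, \autoref{prop_charact_l-refl} directly implies that $P$ is $l$-reflexive. Strictly speaking no unimodular transformation is necessary here; the phrasing in the statement parallels the $l=1$ formulation of Bey, Henk \& Wills referenced just above.

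As for the main obstacle: there is none of substance. The whole argument is two lines once \autoref{prop_charact_l-refl} is in place, and the only nontrivial ingredient — that the Vieta sum of the roots is a real number equal to $n/(2l)$ — uses nothing beyond the integrality of the coefficients of $\LE(kP)$ together with the hypothesis on the real parts of the roots.
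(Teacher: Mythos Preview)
Your proposal is correct and matches the paper's proof essentially line for line: the paper also uses that $\lE_{n-1}(P)/\vol(P)=\sum_i\gamma_i(P)$, cancels the imaginary parts via complex conjugation, and then invokes \autoref{prop_charact_l-refl}. Your remark that no unimodular transformation is actually needed under the stated hypotheses is also accurate.
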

\begin{proof}
Recall that $-\gamma_1(P),\ldots,-\gamma_n(P)$ are the roots of the Ehrhart polynomial of $P$.
It is not hard to see that $\frac{\lE_{n-1}(P)}{\vol(P)}=\sum_{i=1}^n\gamma_i(P)$.
Since the roots come in conjugate pairs, we get $\lE_{n-1}(P)=\frac{n}{2l}\vol(P)$ and \autoref{prop_charact_l-refl}~\romannumeral3) shows that $P$ is $l$-reflexive.
\end{proof}

We close with an open question.

\begin{problem}
Do there exist lattice polytopes whose Ehrhart polynomial has only roots with real part equal to $-\frac1{a}$, for some $a\notin 2\N$?
\end{problem}

\bibliographystyle{amsalpha}
\bibliography{mybib}

\end{document}